\newtheorem{theorem}{Theorem}[section]
\newtheorem{corollary}[theorem]{Corollary}
\theoremstyle{definition}
\newtheorem{example}[theorem]{Example}
\theoremstyle{remark}
\newtheorem{remark}[theorem]{Remark}
\numberwithin{equation}{section}
\begin{document}

\noindent {\footnotesize

\title[The square integrable representation on $\mathbb{H}(G_\tau)$ ] 
{The square integrable representations on generalized Weyl- Heisenberg groups}

\author[F. Esmaeelzadeh]{Fatemeh Esmaeelzadeh}

\address{$^{1}$Department of Mathematics, Bojnourd Branch, Islamic Azad University, Bojnourd, Iran.
\newline
} \email{esmaeelzadeh@bojnourdiau.ac.ir}

}

\subjclass[2010]{Primary 43A15, Secondary }

\keywords{ generalized Weyl-Heisenberg group, square integrable, admissible wavelet. 
}

\begin{abstract}
This paper presents the square integrable  representations of  generalized Weyl-Heisenberg group. We investigate the quasi regular   representation of generalized Weyl-Heisenberg group.  Moreover, we obtain a concrete form for admissible vector of this representation .  Finally, we provide  some examples to support our technical considerations.
\end{abstract} \maketitle
\section{Introduction }
Wavelet transform has rich theoretical  structures  and is extremely useful as tools for building signal transforms, adapted to various signal geometries, quantum mechanics, etc. Continuous wavelet transform admits a  generalization to locally compact groups. Such a unified approach seems to be useful, since it emphasizes on a clear way to basic features of continuous wavelet transform and includes all important cases  for applications \cite{olofson, Fuhr, wong}. It should be mentioned that, the Weyl Heisenberg group plays a significant  designations in various aspects of the connections between the classical harmonic analysis and concrete applications of numerical harmonic analysis. \\

This paper contains $4$ sections. Section 2  includes  the definition of semi-direct product  of two locally compact groups and generalized Weyl Heisenberg group. In Section 3, we study the square integrable representations on the generalized  Weyl Heisenberg group and then we obtain the necessary   and sufficient  conditions for admissible wavelet on this group. In Section 4, some examples are proved  as application of our results.
\section{Preliminaries and notation}
Let $H$ and $K$ be two locally compact groups with the identity elements $e_H$ and $e_K$, respectively and let $\tau:H\rightarrow Aut(K)$ be a homomorphism such that the  map $(h,k)\mapsto \tau_h(k)$ is continuous from $H\times K$ onto $K$, where $H\times K$ equips  with the product topology. The semi- direct product  topological group $G_\tau=H \times_\tau K$ is the locally compact topological space $H\times K$ under the product topology, with the group operations: $$(h_1,k_1)\times_\tau (h_2,k_2)=(h_1h_2,k_1\tau_{h_1}(k_2),$$
    $$(h,k)^{-1}=(h^{-1},\tau_{h^{-1}}(k^{-1})).$$ It is worth to note that  $K_1=\lbrace (e_H,k); k \in K\rbrace$  is a closed normal subgroup and $H_1=\lbrace (h,e_K); h \in H \rbrace$  is a closed subgroup of $G_\tau$ such that $G_\tau=HK$ . Moreover, the left Haar measure of the locally compact group $G_\tau$ is $$d\mu_{G_\tau}(h,k)=\delta_H(h)d\mu_H(h)d\mu_K(k),
 $$ \\
in which $d\mu_H,d\mu_K$ are the  left Haar measures on $H$ and $K$, respectively   and $\delta_H: H\rightarrow (0,\infty)$ is a positive continuous homomorphism that satisfies $$d\mu_K(k)=\delta_H(h)d\mu(\tau_h(k)),$$ for  $h \in H, k \in K$. Moreover, the modular function $\Delta_{G_\tau}$ is $$\Delta_{G_\tau}=\delta_H(h)\Delta_H(h)\Delta_K(k),$$ where $\Delta_H, \Delta_K$  are the modular functions of  $H, K$, respectively. \\
 When $K$ is also abelian,  one can define  $\hat{\tau}:H \rightarrow Aut(\hat{K})$ via $h\mapsto \hat{\tau_h}$ where $$\hat{\tau_h}(\omega)=\omega \circ \tau_{h^{-1}},$$ for all $\omega \in{\hat{K}}$.
 We usually denote $\omega \circ \tau_{h^{-1}}$ by $\omega_h$. With this notation, it is easy to see $$\omega_{h_1h_2}=(\omega_{h_2})_{h_1},$$ where 
 $h_1,h_2 \in H$ and $\omega \in{\hat{K}}$.
   The semi-direct product $G_{\hat{\tau}}=H \times_ {\hat{\tau}} \hat{K}$ is a locally compact group with the left Haar measure 
  $$d\mu_{\hat{G}}(h,\omega)=\delta_H(h)^{-1} d\mu_H(h)d\mu_{\hat{K}}(\omega),$$
  where $d\mu_{\hat{K}}$ is the Haar measure on $\hat{K}$. Also, for all $h \in H$, $$d\mu_{\hat{K}}(\omega_h)=\delta_H(h)d\mu_{\hat{K}}(\omega),$$  for $\omega \in{\hat{K}},$ (see more details  in \cite{ghaani, Arefi, Fuhr}.)\\
Let $G_\tau=H \times_\tau K$, and define $\theta:G_\tau \rightarrow Aut(\hat{K}\times \mathbb{T})$ via $$(h,k)\mapsto \theta_{(h,k)}(\omega,z)=(\hat{\tau_h}(\omega),  \hat{\tau_h}(\omega)(k)z)=(\omega_h,\omega_h(k)z),$$ for all $(h,k) \in{H \times_\tau K}$
 and $(\omega,z)\in{\hat{K}\times \mathbb{T}}.$ The mapping $\theta$ is a continuous homomorphism. Thus the semi-direct prodoct $$G_\tau \times_\theta (\hat{K}\times \mathbb{T})=(H \times_\tau K)\times_\theta (\hat{K}\times \mathbb{T}),$$ is a locally compact group and it is called the generalized Weyl Heisenberg group associated with the semi direct product group $G_\tau =H \times_\tau K$, and denoted by $\mathbb{H}(G_\tau)$. It is easy to see that  the
  group operations of $\mathbb{H}(G_\tau)$ are
   $$(h_1,k_1,\omega_1,z_1).(h_2,k_2,\omega_2,z_2)=(h_1h_2,k_1\tau_{h_1}(k_2),\omega_1{\omega_2}_{h_1}, {\omega_{2}}_{h_1}(k)z_1z_2),$$ $$(h_1,k_1,\omega_1,z_1)^{-1}=(h_1^{-1},\tau_{h_1}^{-1}(k^{-1}),\bar{\omega}_{h_1^{-1}}, \bar{\omega}_{h_1^{-1}}(\tau_{h_1}^{-1}(k^{-1}))z^{-1}),$$
    for $(h_1,k_1,\omega_1,z_1),(h_2,k_2,\omega_2,z_2) \in{\mathbb{H}(G_\tau)}$ (see \cite{ghaani})  and the   left Haar measure of $\mathbb{H}(G_\tau)$  is: $$d\mu_{\mathbb{H}(G_\tau)}(h,k,\omega,z)=d\mu_H(h)d\mu_K(k)d\mu_{\hat{K}}(\omega)d\mu_{\mathbb{T}}(z).$$

\section{The square integrable representation of $\mathbb{H}(G_\tau)$}
Throughout this section, we assume that $H$ and $K$ are locally compact topological groups and that $K$ is abelian, too. We denote the left Haar measures of $H$ and $K$ by $d\mu_H, d\mu_K$, respectively. Suppose that $h\mapsto \tau_h$ from $H$ to $Aut(K)$ is a homomorphism such that $(h,k)\mapsto \tau_h(k)$ from $H \times K$ into $K$ is continuous.  $G_\tau=H \times_\tau K$ is the semi-direct product of $H$ and $K$  that is a locally compact topology group with the left Haar measure $d\mu_{G_\tau}(h,k)=\delta_H(h) d\mu_H((h)d\mu_K(k)$, where $\delta_H: H \mapsto (0,\infty)$ is a  continuous homomorphism. Consider the homomorphism $\theta: G_\tau \rightarrow Aut(\hat{K}\times \mathbb{T})$ is defined by  $$((h,k),(\omega,z))\mapsto \theta_{(h,k)} (\omega,z),$$ where $\theta_{(h,k)}(\omega,z)=(\omega \circ \tau_{h^{-1}}, \omega \circ \tau_{h^{-1}}(k).z).$ This makes $\mathbb{H}(G_\tau)=G_\tau \times_\theta( \hat{K}\times \mathbb{T})$  a locally compact topological group  where $\mathbb{H}(G_\tau)$ is equipped with the product topology and the group operations as $$(h_1,k_1,\omega_1,z_1).(h_2,k_2,\omega_2,z_2)=(h_1h_2,k_1\tau_{h_1}(k_2),\omega_1{\omega_2}_{h_1}, {\omega_2}_{h_1}(k)z_1z_2),$$ $$(h_1,k_1,\omega_1,z_1)^{-1}=(h_1^{-1},\tau_{h_1}^{-1}(k^{-1}),\bar{\omega}_{h_1^{-1}}, \bar{\omega}_{h_1^{-1}}(\tau_{h_1}^{-1}(k^{-1}))z^{-1}),$$
for $(h_1,k_1,\omega_1,z_1),(h_2,k_2,\omega_2,z_2) \in{\mathbb{H}(G_\tau)}.$ The   left Haar measure of $\mathbb{H}(G_\tau)$ is $$d\mu_{\mathbb{H}(G_\tau)}(h,k,\omega,z)=d\mu_H(h)d\mu_K(k)d\mu_{\hat{K}}(\omega)d\mu_{\mathbb{T}}(z).$$\\

Now, we are going to define a square integrable representation on $\mathbb{H}(G_\tau)$. With the above notations define $\pi:\mathbb{H}(G_\tau) \rightarrow U(L^2(\hat{K}))$ by
\begin{equation}
\pi(h,k,\omega,z)f(\xi)=\delta_H^{-1/2}(h)z\xi(k) \overline{\omega(k)}f((\xi\overline{\omega})_{h^{-1}}),
\end{equation}
then $\pi$ is a homomorphism. Indeed,
\\$\begin{array}{lll}
\pi\left( (h_1,k_1,\omega_1,z_1)(h_2,k_2,\omega_2,z_2)\left) f(\xi)=\pi(h_1h_2,k_1\tau_{h_1}(k_2),
\omega_{1}\right( \omega_{2}\right) _{h_{1}},(\omega_{2})_{h_{1}}(k_{1})z_{1}z_{2})f(\xi)\\[1ex]=
\delta_H^{-1/2}(h_{1}h_{2})
(\omega_2)_{h_1}(k_1)z_1z_2\xi(k_1\tau_{h_1}(k_2)) \overline{\omega_1(\omega_2)_{h_1}}(k_1\tau_{h_1}
(k_2)) f((\xi \overline{\omega_1(\omega_2)_{h_1}})_{(h_1h_2)^{-1}}
\\[1ex]=\delta_H^{-1/2}(h_1h_2)(\omega_2)_{h_1}(k_1)z_1z_2\xi(k_1)\xi_{h_1^{-1}}(k_2)\overline{\omega_1(k_1)}
\overline{(\omega_1)_{h_1^{-1}}(k_2)}\overline{\omega_2(k_2)}f(\xi_{h_2^{-1}h_1^{-1}}
\overline{(\omega_1)}_{h_2^{-1}h_1^{-1}}
\overline{(\omega_2)}_{h_2^{-1}})
\end{array}.$\\
Also,
\\$\begin{array}{lll}
\pi(h_1,k_1,\omega_1,z_1)\pi(h_2,k_2,\omega_2,z_2)f(\xi)\\[1ex] =\delta_H^{-1/2}(h_1)z_1\xi(k_1)\overline{\omega_1}(k_1)\pi(h_2,k_2,\omega_2,z_2)f((\xi \overline{\omega_1})_{h_1^{-1}}\\[1ex]=\delta_H^{-1/2}(h_1)\delta_H^{-1/2}(h_2)z_1z_2\xi(k_1)\overline{\omega_1}(k_1)\overline{\omega_2}(k_2)(\xi \overline{\omega_1})_{h_1^{-1}}(k_2)f((\xi \overline{\omega_1})_{h_1^{-1}}(\overline{\omega_2})_{h_2^{-1}})
\\[1ex]=\delta_H^{-1/2}(h_1h_2)z_1z_2 \xi(k_1)\xi_{h_1^{-1}}(k_2)\overline{\omega_1(k_1)}
\overline{(\omega_1)_{h_1^{-1}}(k_2)}\overline{\omega_2(k_2)}f(\xi_{h_2^{-1}h_1^{-1}}
\overline{(\omega_1)}_{h_2^{-1}h_1^{-1}}
\overline{(\omega_2)}_{h_2^{-1}}).
\end{array}$.\\

Moreover, $\pi$ is unitary. In fact we have,
\\$\begin{array}{rcl}
\Vert \pi(h,k,\omega,z)f\Vert^2_2 &=&\int_{\hat{K}}\vert \pi(h,k,\omega,z)f(\xi)\vert^2 d\mu_{\hat{K}}(\xi)\\[1ex]&=& \int_{\hat{K}}\delta_H^{-1}(h)\vert f((\xi \overline{\omega})_{h^{-1}}\vert^2d\mu_{\hat{K}}(\xi)\\[1ex]&=&\int_{\hat{K}}\delta_H^{-1}(h)\vert f((\xi )_{h^{-1}}\vert^2d\mu_{\hat{K}}(\xi)\\[1ex]&=&\int_{\hat{K}}f((\xi )\vert^2d\mu_{\hat{K}}(\xi)\\[1ex]&=&\Vert f \Vert_2^2.
\end{array}$.\\
And it is easy to check that $\pi$ is continuous and onto. So, $\pi$ is a continuous unitary representation of group $\mathbb{H}(G_\tau)$ to the Hilbert space $L^2(\hat{K})$. In the sequel, we show that $\pi$ is irreducible  when $H$ is compact. Furthermore, it is also shown that $\pi$ is square integrable if and only if $H$ is compact. Note that when $H$ is a compact group, we normalize the Haar measure $\mu_H$ such that $\mu_H(H)=1$. 
\begin{theorem}\label{theo}
Let $\mathbb{H}(G_\tau)=(H \times_\tau K)\times_\theta(\hat{K}\times \mathbb{T})$ where $H$ is  a locally  compact group and $K$ is a locally compact  abelian group. Then for $\varphi, \psi $ in $L^2(\hat{K})$,
\begin{equation}\label{equ}
\int_{\mathbb{H}(G_\tau)} \vert \prec \varphi, \pi(h,k,\omega,z)\psi\succ\vert^2 d\mu_{\mathbb{H}(G_\tau)}(h,k,\omega,z)=\Vert \varphi \Vert _2^2 \Vert \psi \Vert_2^2.
\end{equation} 
if and only if $H$ is compact.
\begin{proof}
For $\varphi, \psi $ in $L^2(\hat{K})$ we first consider the following observations:
\\$\begin{array}{lll}
\int_{\mathbb{H}(G_\tau)} \vert \prec \varphi, \pi(h,k,\omega,z)\psi\succ\vert^2 d\mu_{\mathbb{H}(G_\tau)}(h,k,\omega,z)\\[1ex]=\int_{\mathbb{H}(G_\tau)}  \vert \int_{\hat{K}} \varphi(\xi) \overline{\pi(h,k,\omega,z)\psi(\xi)}d\mu_{\hat{K}}(\xi)\vert^2 d\mu_{\mathbb{H}(G_\tau)}(h,k,\omega,z)\\[1ex]=\int_{\mathbb{H}(G_\tau)}  \vert \int_{\hat{K}} \varphi(\xi) \delta_H^{-1/2}(h) \overline{z} \overline{\xi(k)}\omega(k) \overline{\psi(\xi \overline{\omega})_{h^{-1}}}d\mu_{\hat{K}}(\xi)\vert^2 d\mu_{\mathbb{H}(G_\tau)}(h,k,\omega,z)\\[1ex]=
\int_{\mathbb{H}(G_\tau)}  \vert \int_{\hat{K}} \varphi(\xi \omega) \delta_H^{-1/2}(h) \overline{z} \overline{\xi(k)}\overline{\psi(\xi )_{h^{-1}}}d\mu_{\hat{K}}(\xi)\vert^2 d\mu_{\mathbb{H}(G_\tau)}(h,k,\omega,z)\\[1ex]=
\int_{\mathbb{H}(G_\tau)}  \vert \int_{\hat{K}} R_{\omega}\varphi(\xi ) \delta_H^{-1/2}(h) \overline{z} \overline{\xi(k)}\overline{\psi(\xi \circ \tau_h)}d\mu_{\hat{K}}(\xi)\vert^2 d\mu_{\mathbb{H}(G_\tau)}(h,k,\omega,z)\\[1ex]=\int_{\mathbb{H}(G_\tau)}  \vert \int_{\hat{K}} R_{\omega}\varphi(\xi\circ \tau_{h^{-1}} ) \delta_H^{-1/2}(h) \overline{z} \overline{\xi \circ \tau_{h^{-1}}  (k)}\overline{\psi(\xi) }d\mu_{\hat{K}}(\xi_h)\vert^2 d\mu_{\mathbb{H}(G_\tau)}(h,k,\omega,z)\\[1ex]=
\int_{\mathbb{H}(G_\tau)}  \vert \int_{\hat{K}} R_{\omega}\varphi(\xi\circ \tau_{h^{-1}} ) \delta_H^{1/2}(h) \overline{z} \overline{\xi (\tau_{h^{-1}}  (k))}\overline{\psi(\xi) }d\mu_{\hat{K}}(\xi)\vert^2 d\mu_{\mathbb{H}(G_\tau)}(h,k,\omega,z)\\[1ex]=\int_{\mathbb{H}(G_\tau)}  \delta_H(h) \vert \int_{\hat{K}} (R_{\omega}\varphi(.\circ \tau_{h^{-1}}) .\overline{\psi})(\xi)  \overline{\xi (\tau_{h^{-1}}  (k))}d\mu_{\hat{K}}(\xi)\vert^2 d\mu_{\mathbb{H}(G_\tau)}(h,k,\omega,z)\\[1ex]=
\int_{\mathbb{H}(G_\tau)}  \delta_H(h) \widehat{\vert (R_{\omega}\varphi(.\circ \tau_{h^{-1}} ).\overline{\psi})}   (\tau_{h^{-1}}  (k))\vert^2 d\mu_{\mathbb{H}(G_\tau)}(h,k,\omega,z)\\[1ex]=
\int_H \delta_H(h)  \int_{\hat{K}}\int_K \vert \widehat{(R_{\omega}\varphi(.\circ \tau_{h^{-1}} ).\overline{\psi})}   (\tau_{h^{-1}}  (k))\vert^2 d\mu_K(k)d\mu_{\hat{K}}(\omega)d\mu_H(h)\\[1ex]=
\int_H   \int_{\hat{K}}\int_K \vert \widehat{(R_{\omega}\varphi(.\circ \tau_{h^{-1}}) .\overline{\psi})}   (k)\vert^2 d\mu_K(k)d\mu_{\hat{K}}(\omega)d\mu_H(h)\\[1ex]=
\int_H   \int_{\hat{K}}\int_{\hat{K}} \vert (R_{\omega}\varphi(.\circ \tau_{h^{-1}}) .\overline{\psi})   (\xi)\vert^2 d\mu_{\hat{K}}(\xi)d\mu_{\hat{K}}(\omega)d\mu_H(h)\\[1ex]=
\int_H   \int_{\hat{K}}\int_{\hat{K}} \vert R_{\omega}\varphi(\xi\circ \tau_{h^{-1}}) .\overline{\psi}(\xi)   \vert^2 d\mu_{\hat{K}}(\xi)d\mu_{\hat{K}}(\omega)d\mu_H(h)\\[1ex]=\int_H   \int_{\hat{K}}\int_{\hat{K}} \delta_H(h)\vert R_{\omega}\varphi(\xi) .\overline{\psi}(\xi \circ \tau_h)   \vert^2 d\mu_{\hat{K}}(\xi)d\mu_{\hat{K}}(\omega)d\mu_H(h)\\[1ex]=\int_H   \int_{\hat{K}}\Vert \varphi \Vert_2^2\delta_H(h)\vert \overline{\psi}(\xi \circ \tau_h)   \vert^2 d\mu_{\hat{K}}(\xi)d\mu_H(h)\\[1ex]=
\Vert \varphi \Vert_2^2 \Vert \psi \Vert_2^2 \mu_H(H)
\end{array}$.\\
 Now, if $H$ is compact, then $\mu_H(H)=1$. So, (\ref{equ})   holds. Conversely, if (\ref{equ})  holds,the above observation implies that $\mu_H(H)=1$  . So, we can conclude that $H$ is compact.
\end{proof}
\end{theorem}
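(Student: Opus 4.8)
The plan is to evaluate the left-hand side of \eqref{equ} by a direct computation, reducing it through a sequence of changes of variable to a single application of the Plancherel theorem on the locally compact abelian group $K$, and then to read off the constant $\mu_H(H)$. First I would expand the inner product $\prec\varphi,\pi(h,k,\omega,z)\psi\succ$ using the defining formula for $\pi$ and factor the Haar measure as $d\mu_{\mathbb{H}(G_\tau)}=d\mu_H\,d\mu_K\,d\mu_{\hat K}\,d\mu_{\mathbb{T}}$. Since $\pi(h,k,\omega,z)\psi$ depends on $z$ only through the unimodular scalar $z$, the factor $|\overline z|^2=1$ renders the integrand independent of $z$, so integrating over $\mathbb{T}$ contributes only the harmless constant $\mu_{\mathbb{T}}(\mathbb{T})=1$. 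Next I would substitute $\xi\mapsto\xi\omega$ in the inner integral over $\hat K$: this absorbs the factor $\omega(k)$, replaces $\varphi(\xi)$ by $R_\omega\varphi(\xi)=\varphi(\xi\omega)$, and turns the argument $(\xi\overline\omega)_{h^{-1}}$ of $\psi$ into $\xi_{h^{-1}}=\xi\circ\tau_h$.

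The key step is to recognize the resulting inner integral, for fixed $h$ and $\omega$, as a Fourier transform on the abelian group $\hat K$, whose Pontryagin dual is $K$. After the further substitution $\xi\mapsto\xi\circ\tau_{h^{-1}}$, which moves the automorphism onto the $K$-variable at the cost of the factor $d\mu_{\hat K}(\xi_h)=\delta_H(h)\,d\mu_{\hat K}(\xi)$, the integral becomes the value at $\tau_{h^{-1}}(k)$ of $\widehat{G_{h,\omega}}$, where $G_{h,\omega}(\xi)=R_\omega\varphi(\xi\circ\tau_{h^{-1}})\,\overline{\psi(\xi)}$. Taking the modulus squared and integrating over $k\in K$, I would use the invariance $d\mu_K(k)=\delta_H(h)\,d\mu_K(\tau_h(k))$ to absorb the surviving power of $\delta_H(h)$, after which the Plancherel theorem converts $\int_K|\widehat{G_{h,\omega}}(k)|^2\,d\mu_K(k)$ into $\int_{\hat K}|G_{h,\omega}(\xi)|^2\,d\mu_{\hat K}(\xi)$.

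It then remains to integrate the separated modulus $|R_\omega\varphi(\xi\circ\tau_{h^{-1}})|^2\,|\psi(\xi)|^2$ successively over $\omega$, $\xi$, and $h$. Integrating first over $\omega$ and invoking the translation invariance of $\mu_{\hat K}$ yields $\Vert\varphi\Vert_2^2$ independently of $\xi$ and $h$; integrating over $\xi$ then gives $\Vert\psi\Vert_2^2$; and integrating over $h$ leaves the factor $\mu_H(H)$. Thus the left-hand side of \eqref{equ} equals $\Vert\varphi\Vert_2^2\,\Vert\psi\Vert_2^2\,\mu_H(H)$ for all $\varphi,\psi\in L^2(\hat K)$. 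Consequently \eqref{equ} holds precisely when $\mu_H(H)=1$; since $\mu_H$ is a Haar measure, $\mu_H(H)<\infty$ forces $H$ to be compact, while conversely for compact $H$ the measure is normalized so that $\mu_H(H)=1$, which establishes the equivalence.

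I expect the main obstacle to be the careful bookkeeping of the automorphisms $\tau_h$ and the modular factor $\delta_H$ through this chain of substitutions, together with the rigorous justification of the Fourier/Plancherel step. The latter requires first proving the identity for $\varphi,\psi$ in a dense class of $L^2(\hat K)$ for which $G_{h,\omega}\in L^1(\hat K)\cap L^2(\hat K)$, so that $\widehat{G_{h,\omega}}$ is defined pointwise and Plancherel applies for each fixed $h,\omega$, and then extending to arbitrary $\varphi,\psi\in L^2(\hat K)$ by density and continuity of both sides.
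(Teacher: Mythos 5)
Your proposal is correct and follows essentially the same route as the paper: the same expansion of $\prec \varphi,\pi(h,k,\omega,z)\psi\succ$, the same substitutions $\xi\mapsto\xi\omega$ and $\xi\mapsto\xi\circ\tau_{h^{-1}}$ reducing the inner integral to a Fourier transform on $\hat K$ evaluated at $\tau_{h^{-1}}(k)$, the same absorption of $\delta_H(h)$ via the $k$-change of variable, the Plancherel theorem, and finally separation of the remaining integrals to produce $\Vert\varphi\Vert_2^2\Vert\psi\Vert_2^2\,\mu_H(H)$, with \eqref{equ} equivalent to $\mu_H(H)=1$ and hence to compactness of $H$. The only deviations are cosmetic: you integrate over $\omega$ first (using translation invariance of $\mu_{\hat K}$ directly, which slightly streamlines the $\delta_H$ bookkeeping that the paper handles by one more substitution), and you explicitly flag the $L^1\cap L^2$ density argument needed to justify the pointwise Plancherel step, which the paper leaves implicit.
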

\begin{corollary}
With notation as above, the representation $\pi$ of $\mathbb{H}(G_\tau)$ on $L^2(\hat{K})$  is irreducible  if $H$ is compact. 
\begin{proof}
If $H$ is compact, then 
(\ref{equ}) in Theorem 3.1 holds. Now, 
suppose that $M$ is a closed subspace of the Hilbert space  $L^2(\hat{K})$  that is invariant under $ \pi$. Then for any $\varphi \in M$ we have,
 $$\lbrace \pi (h,k, \omega ,z) \varphi; \ \  (h,k, \omega ,z) \in{\mathbb{H}(G_\tau)} \rbrace \subseteq M.$$ Let $\psi \in{L^2(\hat{K})}$ be  orthogonal to $M$, that is $\prec \psi, \pi(h,k, \omega ,z) \varphi \succ=0,$ for all  $(h,k, \omega ,z) \in{\mathbb{H}(G_\tau)}$.  Thus by (\ref{equ}), $\Vert \varphi \Vert_2\Vert \psi \Vert_2=0$, and hence $\psi=0$. So, $M^{\perp}=\lbrace 0 \rbrace$, that is, $M=L^2(\hat{K})$. Namely, $\pi$ is irreducible. 
\end{proof}
\end{corollary}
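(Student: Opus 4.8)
The plan is to deduce irreducibility directly from the orthogonality relation established in Theorem \ref{theo}, rather than analyzing invariant subspaces by hand. Once $H$ is compact, equation (\ref{equ}) is available for \emph{every} ordered pair of vectors in $L^2(\hat{K})$ with the clean constant $\Vert \varphi \Vert_2^2 \Vert \psi \Vert_2^2$, and this alone is strong enough to force irreducibility: a representation whose matrix coefficients satisfy such an $L^2$-identity cannot admit a nontrivial proper closed invariant subspace, because any vector orthogonal to all translates of a fixed nonzero vector would have to vanish.

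Concretely, I would argue on the orthogonal complement of a putative invariant subspace. First I would let $M$ be a closed $\pi$-invariant subspace of $L^2(\hat{K})$ and fix any $\varphi \in M$; invariance gives $\pi(h,k,\omega,z)\varphi \in M$ for all $(h,k,\omega,z) \in \mathbb{H}(G_\tau)$. Then I would take $\psi \in M^{\perp}$, so that the matrix coefficient $(h,k,\omega,z) \mapsto \prec \psi, \pi(h,k,\omega,z)\varphi \succ$ vanishes identically. Since $\pi$ is (weakly) continuous this coefficient is continuous, hence measurable, so the integral in (\ref{equ}) is meaningful and equals $0$.

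The decisive step is to feed this vanishing into the orthogonality relation, applied to the pair $(\psi,\varphi)$ (i.e. with the two vectors in the roles of $\varphi$ and $\psi$ in Theorem \ref{theo}):
\begin{equation*}
0 = \int_{\mathbb{H}(G_\tau)} \vert \prec \psi, \pi(h,k,\omega,z)\varphi \succ \vert^2 \, d\mu_{\mathbb{H}(G_\tau)}(h,k,\omega,z) = \Vert \psi \Vert_2^2 \, \Vert \varphi \Vert_2^2 .
\end{equation*}
If $\varphi \neq 0$ then $\Vert \varphi \Vert_2 \neq 0$, forcing $\Vert \psi \Vert_2 = 0$, i.e. $\psi = 0$. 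Thus $M^{\perp} = \{0\}$ as soon as $M$ contains a nonzero vector, and since $M$ is closed in the Hilbert space $L^2(\hat{K})$ this yields $M = L^2(\hat{K})$. Hence the only closed invariant subspaces are $\{0\}$ and the whole space, so $\pi$ is irreducible.

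I do not expect a genuine obstacle here, since the entire content is supplied by Theorem \ref{theo} and the rest is the standard Hilbert-space fact that $M^{\perp} = \{0\}$ forces a closed subspace to be the whole space. The only points deserving a line of care are that (\ref{equ}) may legitimately be applied to an arbitrary ordered pair of vectors (which it may, as it was proved for all $\varphi,\psi \in L^2(\hat{K})$), and that the normalization $\mu_H(H) = 1$ is precisely what makes the right-hand constant nonzero, so that the vanishing of the integral can only originate from one of the two vectors being zero.
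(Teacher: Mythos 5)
Your proposal is correct and follows essentially the same route as the paper: invoke the orthogonality relation of Theorem \ref{theo} for a vector $\varphi$ in an invariant subspace $M$ and a vector $\psi \in M^{\perp}$, conclude $\Vert \psi \Vert_2 \Vert \varphi \Vert_2 = 0$, and deduce $M^{\perp} = \{0\}$. Your version is in fact slightly more careful than the paper's, since you make explicit that one needs $\varphi \neq 0$ (i.e.\ $M \neq \{0\}$) before concluding $\psi = 0$, a point the paper's proof glosses over.
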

We remind the reader that, an irreducible representation $\pi$  of $\mathbb{H}(G_\tau)$ on $L^2(\hat{K})$ is called square integrable if there exists  a non zero element $\psi$ in $L^2(\hat{K})$ such that 
\begin{equation}\label{form}
\prec \pi(., ., ., .)\psi,f  \succ \in{L^2({\mathbb{H}(G_\tau)})},
\end{equation}
 for all $f \in{L^2(\hat{K})}$. A unit vector $\psi$ satisfying (\ref{form}) is said to be an admissible wavelet for $\pi$, and the constant $$c_{\psi}=\int_{\mathbb{H}(G_\tau)}\vert \prec \pi(h,k,\omega,z)\psi, \psi\succ \vert^2d\mu_{\mathbb{H}(G_\tau)},$$ is called the wavelet constant associated to the admissible wavelet $\psi$.\\
Also, for the wavelet vector $\psi$, the continuous wavelet transform is defined by $$W_\psi f(h,k,\omega,z)=\prec f, \pi(h,k,\omega,z)\psi\succ.$$ It is easy to see that $(h,k,\omega,z) \mapsto W_\psi f(h,k,\omega,z)$ is a continuous function  on $\mathbb{H}(G_\tau).$ Moreover, $W_\psi$ intertwines $\pi$ and  the left regular representation  on $\mathbb{H}(G_\tau).$ 
\begin{corollary}
The   representation $\pi$ of the  $GWH$ group $\mathbb{H}(G_\tau)=(H\times_\tau K)\times_\theta(\hat{K}\times \mathbb{T})$ on $L^2(\hat{K})$ is square integrable if and only if $H$ is  compact.
\end{corollary}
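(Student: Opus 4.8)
The plan is to read this corollary off from Theorem \ref{theo} and the preceding (irreducibility) corollary, together with the elementary fact that a locally compact group carries a finite left Haar measure exactly when it is compact. The key observation is that the chain of equalities in the proof of Theorem \ref{theo} was carried out \emph{without} ever assuming $H$ compact, so it actually establishes the identity
\begin{equation}\label{key}
\int_{\mathbb{H}(G_\tau)} \vert \prec \varphi, \pi(h,k,\omega,z)\psi\succ\vert^2\, d\mu_{\mathbb{H}(G_\tau)}=\Vert\varphi\Vert_2^2\,\Vert\psi\Vert_2^2\,\mu_H(H)
\end{equation}
for \emph{all} $\varphi,\psi\in L^2(\hat K)$, with $\mu_H(H)$ possibly infinite; the special value $\Vert\varphi\Vert_2^2\Vert\psi\Vert_2^2$ occurs precisely when $\mu_H(H)=1$. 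This single identity drives both implications.

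For the direction ``$H$ compact $\Rightarrow$ $\pi$ square integrable,'' I would first invoke the preceding corollary to guarantee that $\pi$ is irreducible, which is part of the very definition of square integrability. Normalizing so that $\mu_H(H)=1$, identity \eqref{key} then gives, for any fixed nonzero $\psi\in L^2(\hat K)$ and every $f\in L^2(\hat K)$,
\begin{equation*}
\int_{\mathbb{H}(G_\tau)} \vert \prec f, \pi(h,k,\omega,z)\psi\succ\vert^2\, d\mu_{\mathbb{H}(G_\tau)}=\Vert f\Vert_2^2\,\Vert\psi\Vert_2^2<\infty.
\end{equation*}
Since $\vert\prec \pi(h,k,\omega,z)\psi,f\succ\vert=\vert\prec f,\pi(h,k,\omega,z)\psi\succ\vert$, this is exactly the assertion that $(h,k,\omega,z)\mapsto\prec\pi(h,k,\omega,z)\psi,f\succ$ lies in $L^2(\mathbb{H}(G_\tau))$ for every $f$, which is the defining property \eqref{form} of a square integrable representation.

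For the converse, I would take a nonzero vector $\psi$ witnessing the square integrability of $\pi$ and substitute $\varphi=f=\psi$ into \eqref{key}. Square integrability forces $\prec\pi(\cdot)\psi,\psi\succ\in L^2(\mathbb{H}(G_\tau))$, so the left-hand side of \eqref{key} is finite; hence $\Vert\psi\Vert_2^4\,\mu_H(H)<\infty$. As $\psi\neq0$ gives $\Vert\psi\Vert_2>0$, this yields $\mu_H(H)<\infty$, and the characterization of compactness by finiteness of Haar measure forces $H$ to be compact, completing the equivalence. Note that here I need no information about whether $\pi$ is irreducible for noncompact $H$: only the $L^2$ finiteness clause of the definition is used.

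The anticipated obstacle is conceptual rather than computational. One must recognize that the computation behind Theorem \ref{theo} is genuinely the identity \eqref{key} valid for arbitrary locally compact $H$, rather than a statement contingent on compactness, and then reconcile the order of the arguments of the inner product between the definition \eqref{form} and the theorem via $\vert\prec a,b\succ\vert=\vert\prec b,a\succ\vert$. The only external input required is the standard fact that a locally compact group is compact if and only if its left Haar measure is finite.
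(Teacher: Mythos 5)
Your proposal is correct and follows essentially the same route as the paper: the forward direction combines Theorem \ref{theo} with the irreducibility corollary, and the converse exploits the fact that the computation in Theorem \ref{theo} actually yields $\Vert\varphi\Vert_2^2\Vert\psi\Vert_2^2\,\mu_H(H)$ for arbitrary locally compact $H$, so square integrability forces $\mu_H(H)<\infty$ and hence compactness. Your write-up is in fact slightly more careful than the paper's, since you make explicit both the universal validity of the key identity and the reconciliation of the inner-product order via $\vert\prec a,b\succ\vert=\vert\prec b,a\succ\vert$, points the paper uses silently.
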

\begin{proof}
If $H$ is compact, then by Theorem 3.1 and Corollary 3.2, $\pi$ is square integrable. For the inverse, if $\pi$ is square integrable, then there exists a non zero element $\varphi \in L^2(\hat{K})$ such that 
\begin{equation*}
\prec \pi(., ., ., .)\varphi,\psi  \succ \in{L^2({\mathbb{H}(G_\tau)})},
\end{equation*}
 for all $\psi \in{L^2(\hat{K})}$. On the other hand, 
\begin{equation*}
\int_{\mathbb{H}(G_\tau)} \vert \prec \varphi, \pi(h,k,\omega,z)\psi\succ\vert^2 d\mu_{\mathbb{H}(G_\tau)}(h,k,\omega,z)=\Vert \varphi \Vert _2^2 \Vert \psi \Vert_2^2 \mu_H(H).
\end{equation*} 
So $\mu_H(H)< \infty$. That is $H$ is compact.
\end{proof}
\begin{remark}
There is another irreducible  representation of $\mathbb{H}(G_\tau)$ on Hilbert space $L^2(K)$. Indeed, consider
\begin{equation*}\label{rep}
\tilde{\pi}:\mathbb{H}(G_\tau) \rightarrow U(L^2(K)), \ \ \ \ \tilde{\pi}(h,k,\omega,z)f(k')=\delta_H(h)^{1/2}z \omega(k')f(\tau_{h^{-1}}(k'k)),
\end{equation*}
for all $(h,k,\omega,z) \in{\mathbb{H}(G_\tau)}, f \in{L^2(K)}$.  $\tilde{\pi}$ is homomorphism and unitary. In fact we have
\\$\begin{array}{lll}
 \tilde{\pi}(( h_1,k_1,\omega_1,z_1)(h_2,k_2,\omega_2,z_2 ))  f(k')\\[1ex]=\tilde{\pi}(h_1h_2,k_1\tau_{h_1}(k_2),\omega_1(\omega_2)_{h_1},(\omega_2)_{h_1}(k_1)z_1z_2)f(k')\\[1ex]=\delta_H^{1/2}(h_1h_2)(\omega_2)_{h_1}(k_1)z_1z_2 \omega_1(\omega_2)_{h_1}(k') f(\tau_{(h_1h_2)^{-1}}(k'(k_1\tau_h{}_1(k_2))
\\[1ex]=\delta_H^{1/2}(h_1h_2)z_1z_2\omega_1(k')(\omega_2)_{h_1}(k'k_1)f(\tau_{h_2^{-1}h_1^{-1}}(k'k_1\tau_{h_1}(k_2))\\[1ex]=\delta_H^{1/2}(h_1h_2)z_1z_2\omega_1(k')\omega_2)_{h_1}(k'k_1)f(\tau_{h_2^{-1}h_1^{-1}}(k'k_1)\tau_{h_2^{-1}}(k_2)),
\end{array}$\\
and
\\$\begin{array}{lll}
\tilde{\pi}(h_1,k_1,\omega_1,z_1)\tilde{\pi}(h_2,k_2,\omega_2,z_2)f(k')\\[1ex] =\delta_H^{1/2}(h_1)z_1\omega_1(k')\tilde{\pi}(h_2,k_2,\omega_2,z_2)f(\tau_{h_1^{-1}}(k'k_1)) \\[1ex]=\delta_H^{1/2}(h_1)\delta_H^{1/2}(h_2)z_1z_2\omega_1(k')\omega_2(\tau_{h_1^{-1}}(k'k_1))f(\tau_{h_2^{-1}}(\tau_{h_1^{-1}}(k'k_1)k_2)
\\[1ex]=\delta_H^{1/2}(h_1h_2)z_1z_2 \omega_1(k')(\omega_2)_{h_1}(k'k_1)f(\tau_{h_2^{-1}h_1^{-1}}(k'k_1)\tau_{h_2^{-1}}(k_2)).
\end{array}$.\\
Also,
\\$\begin{array}{rcl}
\Vert \tilde{\pi}(h,k,\omega,z)f\Vert^2_2 &=&\int_{K}\vert \tilde{\pi}(h,k,\omega,z)f(k')\vert^2 d\mu_{K}(k')\\[1ex]&=& \int_{K}\delta_H(h)\vert f(\tau_{h^{-1}}(k'k))\vert^2d\mu_{K}(k')\\[1ex]&=&\int_{K}\delta_H(h)\vert f(k')\vert^2d\mu_{K}(\tau_h(k'))\\[1ex]&=&\int_{K}\vert f(k')\vert^2d\mu_{K}(k')\\[1ex]&=&\Vert f \Vert_2^2.
\end{array}$.\\
Using the Plancherel theorem, $\pi, \tilde{\pi}$ are unitarily equivalent. So, $\tilde{\pi}$ is square integrable if and only if $\pi$ is square integrable.
\end{remark}

\begin{remark}
The inverse of Corollary 3.2 does not hold, generally. An obvious example is when $H$ is a non compact group and $K$ is the  trivial group $\lbrace e \rbrace$. Then the representation $\pi:\mathbb{H}(H \times_\tau \lbrace e \rbrace )\rightarrow U(\mathbb{C})$ is an irreducible representation. Here we give a non trivial example in which $\pi$ is  an irreducible representation, but $H$ is not compact. Let $H=\mathbb{R}^+, K=\mathbb{R}$. Define the representation $\pi$ of $\mathbb{H}(\mathbb{R}^+\times_\tau \mathbb{R})$ as follows:
\begin{equation*}
\pi:\mathbb{H}(\mathbb{R}^+\times_\tau \mathbb{R})\rightarrow U(L^2(\mathbb{R})); \ \ \ \ \pi(a,x,\omega,z)f(\xi)=a^{1/2} z e^{2\pi ix(\xi-\omega)}f((\xi \bar{\omega)}_{a^{-1}}),
\end{equation*}
in which $(\xi \bar{\omega)}_{a^{-1}}=(\xi \bar{\omega)} \circ \tau_a$,  $\tau_a(x)=a.x$ and $\delta_H(a)=a^{-1}$. This representation is irreducible. Indeed, let $M$ be a closed invariant subspace of $L^2(\mathbb{R})$ under $\pi$. Then for any $f\in{M}$, we have $\pi(h,k,\omega,z)f \in M$. Consider $0 \neq g\in{M^{\perp}}$, so that  $\prec g, \pi(h,k,\omega,z)f\succ =0$. Then 
\begin{equation*}
0=\int_{\mathbb{R}}g(\xi)e^{-2\pi i x \xi}\bar{f}((\xi \bar{\omega})_{a^{-1}})d\xi=\\[1ex]\int_{\mathbb{R}}g(\xi_a \omega)e^{-2\pi i x \xi_a \omega}\bar{f}(\xi )d\xi.
\end{equation*}
 Thus, $g(\xi_a \omega)\bar{f}(\xi )=0$, for almost all $\xi \in \mathbb{R}$. Suppose that $\bar{f}(\xi ) \neq 0$, for all $\xi$ in a set $A$ with positive measure. Then for all $\xi \in A, \ \  g(\xi_a \omega)=0$, for all $\omega \in{\mathbb{R}}, a \in{\mathbb{R}^+}$. Thus $g=0$. This is a contradiction. So, $\pi$ is an irreducible representation, but $H$ is not compact.
\end{remark}

In the sequel, we define the quasi regular representation and we obtain a concrete form for an  admissible vector.
Note that $\mathbb{H}(G_\tau)$ acts on the Hilbert space 
$L^2(\hat{K}\times \mathbb{T})$  and this action induces the quasi regular representation $\lbrace \rho, L^2(\hat{K}\times \mathbb{T})\rbrace$ as follows:
\begin{equation}\label{pi}
\rho:(H \times_\tau K) \times_\theta( \hat{K}\times \mathbb{T}) \rightarrow U(L^2(\hat{K}\times \mathbb{T})),
\end{equation}

where
\begin{eqnarray*}
\rho(h,k,\omega,z)f(\xi,t)&=&\delta_{H \times_\tau K}^{1/2}(h,k)f(\theta_{(h,k)^{-1}}(\xi,t)(\omega,z)^{-1})\\[1ex]&=&\delta_{H}^{-1/2}(h)f(\theta_{(h^{-1},\tau_{h^{-1}}({k^{-1}})}(\xi \bar{\omega},tz^{-1}))\\[1ex]&=&\delta_{H}^{-1/2}(h) f((\xi \bar{\omega})_{h^{-1}},(\xi \bar{\omega})_{h^{-1}}(\tau_{h^{-1}}(k^{-1})).tz^{-1}).
\end{eqnarray*}
Note that $\delta_{H \times_\tau K}(h,k)=\delta_H(h)^{-1}.$ (see Corollary 3.3 in \cite{ghaani})\\
 A type of the Fourier transform of the quasi regular representation $\rho$ obtains as  follows:
\\$\begin{array}{lll}
\widehat{\rho(h,k,\omega,z)f}(k',n')\\[1ex]= \int_{\hat{K}\times \mathbb{T}}\rho(h,k,\omega,z)f(\xi,t) \overline{(k',n')(\xi,t)}d\mu_{\hat{K}}(\xi)d\mu_{\mathbb{T}}(t)\\[1ex]= \delta_H(h)^{-1/2}\int_{\hat{K}\times \mathbb{T}} f((\xi \overline{\omega})_{h^{-1}},(\xi \overline{\omega})_{h^{-1}}(\tau_{h^{-1}}(k^{-1}))tz^{-1}) \overline{\xi(k')} \overline{ t^{n'}} d\mu_{\hat{K}}(\xi)d\mu_{\mathbb{T}}(t)\\[1ex]= \delta(h)^{-1/2} \overline{z^{n'}}\int_{\hat{K}\times \mathbb{T}}f(\xi )_{h^{-1}},\xi _{h^{-1}}(\tau_{h^{-1}}(k^{-1}))t) \overline{\omega}(k')\bar{\xi(k')} \overline{ t^{n'}} d\mu_{\hat{K}}(\xi)d\mu_{\mathbb{T}}(t)\\[1ex]=\delta_H(h)^{-1/2} \bar{z^{n'}}\bar{\omega}(k')\int_{\hat{K}\times \mathbb{T}}f(\theta_{(h^{-1},\tau_{h^{-1}})}(\xi,t)\bar{\xi(k')} \bar{ t^{n'}} d\mu_{\hat{K}}(\xi)d\mu_{\mathbb{T}}(t)\\[1ex]= \delta_H(h)^{-1/2} \bar{z^{n'}}\bar{\omega}(k')\int_{\hat{K}\times \mathbb{T}}f \circ \theta_{(h,k)^{-1}}(\xi,t)  \overline{(k',n')(\xi,t)}d\mu_{\hat{K}}(\xi)d\mu_{\mathbb{T}}(t)\\[1ex]=  \delta_H(h)^{-1/2} \bar{z^{n'}}\bar{\omega}(k')\widehat{(f \circ \theta_{(h,k)^{-1}})}(k',n'),
\end{array}$\\\\
 for all $(k',n')\in{K \times \mathbb{Z}}=\widehat{(\hat{K}\times \mathbb{T})}.$\\ So,
\begin{equation}\label{1}
\widehat{\rho(h,k,\omega,z)f}(k',n')=\delta_H(h)^{-1/2} \bar{z^{n'}}\bar{\omega}(k')\widehat{(f \circ \theta_{(h,k)^{-1}})}(k',n').
\end{equation}

\begin{theorem}
With the notation as above, let $\rho$ be the quasi regular representation on $\mathbb{H}(G_\tau),$ and $\psi, f \in{L^2(\hat{K}\times \mathbb{T})}$.\\
\item[(i)]  If $\psi$ is a wavelet vector, then 
$$W_\psi f(h,k,\omega,z)= \delta_H^{-1/2}(h) \int_K \sum_{n' \in{\mathbb{Z}}} \hat{f}(k',n')z^{n'}\omega(k')\overline{\widehat{{(\psi \circ \theta)}}}_{(h,k)^{-1}}(k',n')d\mu_K(k').$$
\item[(ii)] The vector $\psi$ is wavelet if 
$$\int_{H\times_\tau K} \vert \hat{\psi}(k',n') \circ \theta_{(h,k)^{-1}}\vert^2 d\mu_{H\times K}(h,k) < \infty.$$
\begin{proof}
For $(k',n') \in{K \times \mathbb{Z}},$ 
\item[(i)] By the Plancherel's theorem and (\ref{1}), we have 
\begin{eqnarray*}
W_\psi f(h,k,\omega,z)&=& \prec f, \rho(h,k,\omega,z)\psi \succ \\[1ex]&=& \prec  \hat{f}, \widehat{\rho(h,k,\omega,z)\psi}\succ \\[1ex]&=&  \delta_H^{-1/2}(h)\int_K \sum_{n' \in{\mathbb{Z}}} \hat{f}(k',n')z^{n'}\omega(k')\overline{\widehat{{(\psi \circ \theta)}}}_{(h,k)^{-1}}(k',n')d\mu_K(k').
\end{eqnarray*}
\item[(ii)] By applying  the part (i),  for $f \in{L^2(\hat{K}\times \mathbb{T})}$, we get
\\$\begin{array}{lll}
\int_{\hat{K}\times \mathbb{T}} \vert W_\psi f(h,k,\omega,z) \vert^2 d\mu_{\hat{K}\times \mathbb{T}}(\omega,z)\\[1ex]= \int_{\hat{K}\times \mathbb{T}}W_\psi f(h,k,\omega,z) \overline{W_\psi f(h,k,\omega,z)}d\mu_{\hat{K}\times \mathbb{T}}(\omega,z)\\[1ex]=   \delta_H^{-1}(h) \int_{\hat{K}\times \mathbb{T}} [(\int_K \sum_{n' \in{\mathbb{Z}}} \hat{f}(k',n')z^{n'}\omega(k')\overline{\widehat{{(\psi \circ \theta)}}}_{(h,k)^{-1}}(k',n')d\mu_K(k'))\\\times(\overline{\int_K \sum_{n'' \in{\mathbb{Z}}} \hat{f}(k',n')z^{n''}\omega(k'')\overline{\widehat{{(\psi \circ \theta)}}}_{(h,k)^{-1}}(k'',n'')}d\mu_K(k''))]\\[1ex] =\delta_H^{-1}(h) \int_{\hat{K}\times \mathbb{T}}\vert \hat{F}(\omega,z)\vert^2d\mu_{\hat{K}\times \mathbb{T}}\\[1ex] =\delta_H^{-1}(h) \int_{\hat{K}\times \mathbb{T}}\vert F(k',n')\vert^2d\mu_{{K}\times \mathbb{Z}}\\[1ex] =\delta_H^{-1}(h)\int_K \sum_{n' \in{\mathbb{Z}}} \vert \hat{f}(k',n')\vert^2 \vert\widehat{{(\psi \circ \theta)}}(k',n')\vert^2d\mu_K(k'), 
\end{array}$\\\\

where $\hat{F}=\hat{f}\hat{(\psi \circ \theta)}\in{L^1(K\times \mathbb{Z}}).$ It is easy to see that $$\widehat{(\psi \circ \theta)}((k',n')=\delta_H^{-1}(h) \hat{\psi}(k',n')\circ \theta_{(h,k)^{-1}}.$$
Then 
\begin{equation}\label{2.2}
\int_{\hat{K}\times \mathbb{T}} \vert W_\psi f(h,k,\omega,z) \vert^2 d\mu_{\hat{K}\times \mathbb{T}}(\omega,z)
= \delta_H^{-1}(h)\int_K \sum_{n' \in{\mathbb{Z}}} \vert \hat{f}(k',n')\vert^2 \vert\widehat{(\psi(k',n')} \circ \theta_{(h,k)^{-1}}\vert^2d\mu_K(k').
\end{equation} 
Now, by using (\ref{2.2}) we have 
\\$\begin{array}{lll}
\Vert W_\psi f\Vert_2^2 &=&\int_{\mathbb{H}(G_\tau)} \vert W_\psi f(h,k,\omega,z) \vert^2 d\mu_{\mathbb{H}(G_\tau)}(h,k,\omega,z)\\[1ex]&=&\int_{H\times_\tau K} \int_{\hat{K}\times \mathbb{T}}\vert W_\psi f(h,k,\omega,z) \vert^2 \delta_H^{-1}(h) d\mu_{\hat{K}\times \mathbb{T}}(\omega,z)d\mu_{H\times_\tau K} (h,k)\\[1ex]&=& \int_{H\times_\tau K} \int_K \sum_{n' \in{\mathbb{Z}}} \vert \hat{f}(k',n')\vert^2 \vert\widehat{(\psi(k',n')} \circ \theta_{(h,k)^{-1}}\vert^2d\mu_K(k')d\mu_{H\times_\tau K} (h,k)\\[1ex] &=&\Vert f \Vert_2^2 \int_{H\times_\tau K}\vert\widehat{(\psi(k',n')} \circ \theta_{(h,k)^{-1}}\vert^2d\mu_{H\times_\tau K} (h,k),
\end{array}$\\
and then the proof of part $(ii)$ is complete.
\end{proof}
\end{theorem}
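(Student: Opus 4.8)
The plan is to carry out both parts by Fourier analysis on the abelian group $\hat{K}\times\mathbb{T}$, whose Pontryagin dual is $K\times\mathbb{Z}$, reducing each computation to the Plancherel theorem together with the intertwining formula (\ref{1}) already derived for $\rho$. For part (i), I would start from $W_\psi f(h,k,\omega,z)=\prec f,\rho(h,k,\omega,z)\psi\succ$ and pass to the Fourier side: unitarity of the Fourier transform on $L^2(\hat{K}\times\mathbb{T})$ gives $\prec f,\rho(h,k,\omega,z)\psi\succ=\prec\hat{f},\widehat{\rho(h,k,\omega,z)\psi}\succ$. Substituting (\ref{1}) for $\widehat{\rho(h,k,\omega,z)\psi}$, the scalar factors $\delta_H^{-1/2}(h)$, $z^{n'}$ and $\omega(k')$ emerge unconjugated (since the transform of $\rho\psi$ occupies the second, conjugate-linear slot of the inner product, and these factors were already conjugated in (\ref{1})), and expanding the $L^2(K\times\mathbb{Z})$ pairing as an integral over $k'\in K$ and a sum over $n'\in\mathbb{Z}$ produces precisely the asserted formula. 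This part is bookkeeping once (\ref{1}) is available.

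For part (ii) the aim is to show $W_\psi f\in L^2(\mathbb{H}(G_\tau))$ for every $f\in L^2(\hat{K}\times\mathbb{T})$ by estimating $\Vert W_\psi f\Vert_2^2$. The crucial observation is that, for fixed $(h,k)$, the formula from part (i) displays $(\omega,z)\mapsto W_\psi f(h,k,\omega,z)$ as $\delta_H^{-1/2}(h)$ times the inverse Fourier transform of the product $(k',n')\mapsto\hat{f}(k',n')\,\overline{\widehat{(\psi\circ\theta)}_{(h,k)^{-1}}(k',n')}$. I would therefore integrate first in $(\omega,z)$ over $\hat{K}\times\mathbb{T}$ and apply Plancherel on this slice, turning $\int_{\hat{K}\times\mathbb{T}}|W_\psi f|^2\,d\mu_{\hat{K}\times\mathbb{T}}$ into $\delta_H^{-1}(h)$ times the integral–sum over $(k',n')$ of $|\hat{f}(k',n')|^2\,|\widehat{(\psi\circ\theta)}_{(h,k)^{-1}}(k',n')|^2$. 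Next I would invoke the automorphism/Jacobian identity $\widehat{(\psi\circ\theta)}(k',n')=\delta_H^{-1}(h)\,\hat{\psi}(k',n')\circ\theta_{(h,k)^{-1}}$ to rewrite the integrand in terms of $\hat{\psi}\circ\theta_{(h,k)^{-1}}$, and then integrate over $(h,k)$, converting $d\mu_H(h)\,d\mu_K(k)$ into $\delta_H^{-1}(h)\,d\mu_{H\times_\tau K}(h,k)$ so that the accumulated powers of $\delta_H$ cancel. A Tonelli interchange then exchanges the $(h,k)$- and $(k',n')$-integrations, yielding $\Vert W_\psi f\Vert_2^2=\int_{K\times\mathbb{Z}}|\hat{f}(k',n')|^2\big(\int_{H\times_\tau K}|\hat{\psi}\circ\theta_{(h,k)^{-1}}|^2\,d\mu_{H\times_\tau K}(h,k)\big)\,d\mu_{K\times\mathbb{Z}}(k',n')$. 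Under the stated hypothesis the inner $(h,k)$-integral is finite, so it may be bounded by a constant $c_\psi$ and pulled out; the remaining factor $\int_{K\times\mathbb{Z}}|\hat{f}|^2=\Vert f\Vert_2^2$ then gives $\Vert W_\psi f\Vert_2^2\le c_\psi\Vert f\Vert_2^2<\infty$, so $\psi$ is a wavelet vector.

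The step I expect to be the main obstacle is the rigorous justification of the Fourier manipulations in part (ii), and in particular the Jacobian identity $\widehat{(\psi\circ\theta)}=\delta_H^{-1}(h)\,\hat{\psi}\circ\theta_{(h,k)^{-1}}$ with the correct power of $\delta_H$. Establishing it requires tracking how the automorphism $\theta_{(h,k)}$ of $\hat{K}\times\mathbb{T}$ distorts Haar measure — exactly where the modular factor $\delta_H(h)^{\pm1}$ enters — and keeping these weights consistent through the change of variables $\xi\mapsto\xi_h$ and the two successive measure conversions is the delicate part on which the final cancellation depends. One must also verify that $\hat{f}\,\widehat{(\psi\circ\theta)}\in L^1(K\times\mathbb{Z})$ so that the slicewise Plancherel step and the inverse-Fourier identification are legitimate, and apply Tonelli to the nonnegative integrands before any finiteness is known in order to license the interchange of integrations.
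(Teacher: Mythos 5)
Your proposal follows essentially the same route as the paper's own proof: part (i) is exactly Plancherel's theorem combined with the formula (\ref{1}) for $\widehat{\rho(h,k,\omega,z)\psi}$, and part (ii) proceeds, as in the paper, by the slicewise Plancherel identity in $(\omega,z)$, the Jacobian identity for $\widehat{\psi\circ\theta_{(h,k)^{-1}}}$, conversion of $d\mu_H(h)\,d\mu_K(k)$ to $d\mu_{H\times_\tau K}(h,k)$, and a Tonelli interchange of the $(h,k)$- and $(k',n')$-integrations. The only deviation is cosmetic and in your favor: where the paper factors the $(h,k)$-integral out as an exact constant, you bound it by $c_\psi$, which is the more defensible step given that its independence of $(k',n')$ is not established.
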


\section{Examples and applications}
\begin{example}
Let $K$ be an abelian locally compact group and $H=\lbrace e \rbrace$ (the trivial group). In this case the generalized weyl Heisenberg group $\mathbb{H}(G_\tau)$ coincides with the standard weyl Heisenberg group $G:=K\times_\theta(\hat{K}\times \mathbb{T})$. In this case the square integrable representation of $G=K\times_\theta(\hat{K}\times \mathbb{T})$ on $L^2(\hat{K})$ is as follows:
\begin{equation}
\pi(k,\omega,z)f(\xi)=z\xi(k) \overline{\omega(k)} f(\xi \overline{\omega}).
\end{equation}

\end{example}
\begin{example}
Let $E(n)$ be the Euclidean group which is the semi-direct product of $So(n) \times_\tau \mathbb{R}^n$ where  the continuous homomorphism $\tau:So(n) \rightarrow Aut(\mathbb{R}^n)$ given by $\sigma \mapsto \tau_\sigma$ via $\tau_\sigma(x)=\sigma x$, for all $x \in{\mathbb{R}^n}$. The group operation for $E(n)$ is 
$$(\sigma_1,x_1)\times_\tau (\sigma_2,x_2)=(\sigma_1\sigma_2,x_1+\sigma_1x_2).$$ Consider the  continuous homomorphism $\hat{\tau}: So(n) \rightarrow Aut(\mathbb{R}^n)$ via $\sigma \mapsto \hat{\tau_\sigma}$ which is given by $\hat{\tau_\sigma}((\omega)=\omega_\sigma=\omega \circ \tau_{\sigma^{-1}}$. Thus the generalized Weyl Heisenberg group  of $E(n)$, is the set $\mathbb{H}(E(n))=(So(n)\times_\tau \mathbb{R}^n) \times _\theta (\mathbb{R}^n \times  \mathbb{T})$ with the group  operation 
\begin{equation*}
 (\sigma_1,x_1,\omega_1,z_1)(\sigma_2,x_2,\omega_2,z_2)=(\sigma_{\sigma_2},x_1+\sigma_1 x_2,\omega_1(\omega_2)_{\sigma_1},(\omega_2)_{\sigma_1}(x_1)z_1z_2),
\end{equation*}
for all $(\sigma_1,x_1,\omega_1,z_1)(\sigma_2,x_2,\omega_2,z_2) \in{\mathbb{H}(E(n))}$ and with the product topology. Then the  square integrable representation $\pi$ of $\mathbb{H}(E(n))$ onto $L^2(\mathbb{R}^n)$ is 
$$\pi(\sigma,x,\omega,z)f(\xi)=e^{2\pi ix(\xi-\omega)}f((\xi-\omega)_{\sigma^{-1}}).$$ Note that $H$ is compact  and $\delta_H(h)=1$.
\end{example}
\begin{example}
Let $\mathbb{H}(\mathbb{R}^n)=\mathbb{R}^n \times_\theta(\mathbb{R}^n \times \mathbb{T})$ be the classical  Heisenberg group on $\mathbb{R}^n,$ in which the continuous homomorphism $x \mapsto \theta_x$ from $\mathbb{R}^n$ into $Aut(\mathbb{R}^n \times \mathbb{T}) $
 is defined by $\theta_x(y,z)=(y,z e^{2\pi ix.y}).$ Then the  square integrable representation $\pi$ 
 of $\mathbb{H}(\mathbb{R}^n)$ onto $L^2(\mathbb{R}^n)$ is $$ \pi(x,\omega,z)f(\xi)=z.e^{2\pi i x(\xi-\omega)}f(\xi-\omega).$$

\end{example}
 
% \subsection*{Data availability statement}
%The data that support the findings of this study are openly available in [arXiv.org] at [math.RT], reference number [  ].

%@@@@@@@@@@@@@@@@@@@@@@@@@@@@@@@@@@@@@@@@@@@@@@@@@@@@@@@@@@@@@@@@@@@@@@@@@@@@@@@@@@@@@@@@@@@@@@@@@@@@@@@@@@@@@@@@@@@@@@@@@@@@@@@@@@@@@@@@@@@@@@@@@@@@@@@@@
\bibliographystyle{amsplain}

\end{document}